\newtheorem{thm}{Theorem}
\theoremstyle{definition}
\theoremstyle{remark}
\numberwithin{equation}{section}
\newtheorem*{thm A}{Theorem A}
\begin{document}
\thispagestyle{empty}\pagestyle{myheadings}%
\markboth{\small Generating functions}{\small V.Gupta et al. }
\begin{center}
\textbf{{\Large Generating functions for $q$-Bernstein,\\ $q$-Meyer-K\"{o}nig-Zeller and $q$-Beta basis}} \vskip0.3in
\textbf{Vijay Gupta}\\
{School of Applied Science, Netaji Subhas Institute of Technology\\ Sector 3 Dwarka
 110078 New Delhi, India}\\{vijaygupta2001@hotmail.com}
\vskip0.3in
\textbf{Taekyun Kim, Jongsung Choi and Young-Hee Kim}\\
{Division of General Education-Mathematics, Kwangwoon University\\
Seoul 139-701, S. Korea,}\\
 {tkkim@kw.ac.kr, jeschoi@kw.ac.kr, yhkim@kw.ac.kr}\vskip0.3in

\end{center}
 \vskip0.3in \textbf{Abstract.}   The present paper deals with the $q$-analogues of Bernstein,
 Meyer-K\"{o}nig-Zeller and Beta operators. Here we estimate the generating functions for $q$-Bernstein, $q$-Meyer-K\"{o}nig-Zeller and $q$-Beta basis functions.
   \\ \vskip0.1in
\textbf{Keywords:} $q-$integers, $q-$binomial coefficient, $q$-exponential, $q$-Bernstein basis, $q$-Meyer-K\"{o}nig-Zeller basis and $q$-Beta basis function.\\

\textbf{Mathematical subject classification:} 41A25, 41A35.\\

\section{\bf Introduction}
For each non-negative integer $k$, the
$q-$integer $[k]_q$ and the $q-$factorial $[k]_q!$ are respectively
defined by $$ [k]_q=\left\{\begin{array}{ll} (1-q^k)\big / (1-q),\
\ &q\neq 1\\k,&q=1\end{array},\right.$$ and $$
[k]_q!=\left\{\begin{array}{ll} [k]_q\, [k-1]_q\cdots [1]_q,\ \ &k\ge
1\\1,&k=0\end{array}.\right.\  $$ For the integers $n,\ k $
satisfying $ n\ge k\ge 0$, the $q-$binomial coefficients are
defined by
$$\left[\begin{array}{c}n\\k\end{array}\right]_q=\frac{[n]_q!}{[k]_q![n-k]_q!}\
$$
(see e.g. \cite{TE}).
We consider the $q$-exponential function in the following form:
 \begin{eqnarray*}
 \lim_{n\rightarrow \infty}\frac{1}{(1-x)_q^n} &=&   \lim_{n\rightarrow \infty}\sum_{k=0}^\infty  \left[ {\begin{array}{*{20}{c}}
  {n+k-1}  \\
  k  \\
 \end{array}} \right]_q x^k \\
  &=& \lim_{n\rightarrow \infty}\sum_{k=0}^\infty \frac{(1-q^{n+k-1})....(1-q^n)}{(1-q)(1-q^2)...(1-q^k)}x^k\\
  &=& \sum_{k=0}^\infty \frac{x^k}{(1-q)(1-q^2)...(1-q^k)}=e_q(x).
   \end{eqnarray*}
   Another form of $q$-exponential function is given as follows:
      \begin{eqnarray*}
\lim_{n\rightarrow \infty}(1+x)_q^n=\sum_{k=0}^\infty \frac{q^{k(k-1)/2}x^k}{(1-q)(1-q^2)...(1-q^k)}=E_q(x).
\end{eqnarray*}
It is easily observed that $e_q(x)E_q(-x)=e_q(-x)E_q(x)=1$.\\

Based on the $q$-integers Phillips \cite{Phillips} introduced the $q$ analogue of the well known Bernstein polynomials. For $f\in C[0,1]$ and $0<q<1$, the $q$-Bernstein polynomials are defined as
\begin{equation}\label{1}
{\mathcal{B}_{n,q}}\left( {f,x} \right) = \sum\limits_{k = 0}^n  {b_{k,n}^q(x)} f\left( \frac{[k]_q}{[n]_q} \right),
\end{equation}
where the $q$-Bernstein basis function is given by
$$b_{k,n}^q(x) = \left[ {\begin{array}{*{20}{c}}
   {n}  \\
   k  \\
\end{array}} \right]_q x^k (1 - x)_q^{n - k}, x\in [0,1]$$
and $\left(a-b\right)_{q}^{n}=\prod\limits_{s=0}^{n-1}(a-q^{s}b),\quad a,b\in\mathbb{R}.$

Also Trif \cite{Trif} proposed the $q$ analogue of well known Meyer-K\"{o}nig-Zeller operators. For $f\in C[0,1]$ and $0<q<1$, the $q$-Meyer-K\"{o}nig-Zeller operators are defined as
\begin{equation}\label{1}
{\mathcal{M}_{n,q}}\left( {f,x} \right) = \sum\limits_{k = 0}^\infty  m_{k,n}^q(x) f\left( \frac{[k]_q}{[n]_q} \right),
\end{equation}
where the $q$-MKZ basis function is given by
$$m_{k,n}^q(x) = \left[ {\begin{array}{*{20}{c}}
   {n+k+1}  \\
   k  \\
\end{array}} \right]_q x^k (1 - x)_q^{n}, x\in [0,1].$$

For $f\in C[0,\infty)$ and $0<q<1$, the $q$-Beta operators are defined as
       \begin{equation}\label{1}
       {\mathcal{V}_{n}}\left( {f,x} \right) = \frac{1}{[n]_q}\sum\limits_{k = 0}^\infty  v_{k,n}^q(x) f\left( \frac{[k]_q}{q^{k-1}[n]_q} \right),
 \end{equation}
  where the $q$-Beta basis function is given by    $$v_{k,n}^q(x) = \frac{q^{k(k-1)/2}}{B_q(k+1,n)}\frac{x^k}{(1+x)_q^{n+k+1}}, x\in [0,\infty )$$
       and $B_q(m,n) $ is $q$-Beta function.\\

In the present paper we establish the generating functions for $q$-Bernstein, $q$-Meyer-K\"{o}nig-Zeller and $q$-Beta basis functions.

\section{\bf Generating Function for $q$-Bernstein basis}
\begin{thm}\label{thm:1}
$b_{k,n}^q(x) $ is the coefficient of $\frac{t^n}{[n]_q!}$ in the expansion of
$$\frac{x^kt^k}{[k]_q!}e_q((1-q)(1-x)_qt).$$
\end{thm}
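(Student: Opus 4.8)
The plan is to extract the coefficient of $t^n/[n]_q!$ from the given generating function and verify it equals $b_{k,n}^q(x)$. The key observation is that the $q$-exponential $e_q(x)$ defined in the introduction has a subtle normalization: the series $e_q(x)=\sum_{j\ge 0} x^j/\big((1-q)(1-q^2)\cdots(1-q^j)\big)$ can be rewritten in terms of $q$-factorials by noting that $(1-q)(1-q^2)\cdots(1-q^j) = (1-q)^j [j]_q!$, so that $e_q(x)=\sum_{j\ge 0} x^j/\big((1-q)^j [j]_q!\big)$. Consequently, when I substitute the argument $(1-q)(1-x)_q t$ into $e_q$, the troublesome factor $(1-q)^j$ cancels, yielding $e_q\big((1-q)(1-x)_q t\big)=\sum_{j\ge 0} (1-x)_q^{\,j} t^j/[j]_q!$. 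This cancellation is the pivotal simplification that makes the whole identity clean.

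First I would establish the rewriting $e_q\big((1-q)(1-x)_q t\big)=\sum_{j=0}^{\infty} (1-x)_q^{\,j}\,\dfrac{t^j}{[j]_q!}$, being careful that $(1-x)_q^{\,j}$ here means the $q$-power $\prod_{s=0}^{j-1}(1-q^s x)$ in the paper's notation, and that it is a constant with respect to $t$. Then I would multiply by the prefactor $x^k t^k/[k]_q!$ to obtain the full expression as a single power series in $t$:
\begin{equation*}
\frac{x^k t^k}{[k]_q!}\, e_q\big((1-q)(1-x)_q t\big)=\sum_{j=0}^{\infty}\frac{x^k\,(1-x)_q^{\,j}}{[k]_q!\,[j]_q!}\,t^{k+j}.
\end{equation*}

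Next I would read off the coefficient of $t^n$. Setting $k+j=n$ forces $j=n-k$ (valid since we need $n\ge k$), so the coefficient of $t^n$ is $x^k (1-x)_q^{\,n-k}/\big([k]_q!\,[n-k]_q!\big)$. To match the claimed form I multiply through by $[n]_q!$, obtaining $\dfrac{[n]_q!}{[k]_q!\,[n-k]_q!}\,x^k (1-x)_q^{\,n-k}$, and I recognize the factorial ratio as the $q$-binomial coefficient $\left[{n\atop k}\right]_q$ from the introduction. This is exactly $b_{k,n}^q(x)$ as defined in the paper.

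The main obstacle I anticipate is not any deep computation but rather bookkeeping around the two distinct meanings of the notation $(1-x)_q$: the scalar argument fed into $e_q$ versus the $q$-shifted power $(1-x)_q^{\,n-k}$ appearing in the basis function. I would take care to confirm that the identity $(1-q)(1-q^2)\cdots(1-q^j)=(1-q)^j[j]_q!$ is applied correctly and that the geometric factor $(1-q)^j$ cancels precisely against the $(1-q)^j$ arising from raising the argument $(1-q)(1-x)_q t$ to the $j$-th power. A secondary point worth a remark is the convergence and formal-power-series legitimacy of rearranging the series for $0<q<1$, though since we ultimately only extract a single finite coefficient, treating the expansion as a formal power series in $t$ suffices and sidesteps analytic convergence concerns.
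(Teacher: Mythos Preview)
Your proof is correct and follows essentially the same route as the paper's: expand $e_q\big((1-q)(1-x)_q t\big)$ as $\sum_{j\ge 0}(1-x)_q^{\,j}t^j/[j]_q!$, multiply by the prefactor, and then match the coefficient of $t^n/[n]_q!$ with the $q$-binomial definition of $b_{k,n}^q(x)$. The only cosmetic difference is ordering: the paper first rewrites $1/([k]_q![j]_q!)$ to expose $\left[{j+k\atop k}\right]_q/[j+k]_q!$ and then shifts the summation index, whereas you set $j=n-k$ first and then assemble the $q$-binomial; your added remarks on the $(1-q)^j$ cancellation and on the symbolic reading of $(1-x)_q$ inside $e_q$ make explicit a step the paper passes over silently.
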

\begin{proof}
First consider
\begin{eqnarray*}
\frac{x^kt^k}{[k]_q!}e_q((1-q)(1-x)_qt)&=&\frac{x^kt^k}{[k]_q!}\sum_{n=0}^\infty \frac{(1-x)_q^nt^n}{[n]_q!}\\
&=&\frac{1}{[k]_q!}\sum_{n=0}^\infty \frac{x^k(1-x)_q^nt^{n+k}}{[n]_q!}\\
&=&\sum_{n=0}^\infty \frac{[n+1]_q[n+2]_q.....[n+k]_qx^k(1-x)_q^nt^{n+k}}{[n+k]_q![k]_q!}\\
&=&\sum_{n=0}^\infty \left[\begin {array}{c}
        n+k \\
        k
      \end{array}
\right]_q\frac{x^k(1-x)_q^nt^{n+k}}{[n+k]_q!}\\
 &=&\sum_{n=k}^\infty \left[\begin {array}{c}
 n \\
  k
   \end{array}
     \right]_q  \frac{x^k(1-x)_q^{n-k}t^n}{[n]_q!}=\sum_{n=0}^\infty b_{k,n}^q(x)\frac{t^n}{[n]_q!}.
\end{eqnarray*}
This completes the proof of generating function for $b_{k,n}^q(x).$
\end{proof}
\section{\bf Generating Function for $q$-MKZ basis}
\begin{thm}\label{thm:1}    $m_{k,n}^q(x) $ is the coefficient of $t^k$ in the expansion of $\frac{(1-x)_q^n}{(1-xt)_q^{n+2}}.$
 \end{thm}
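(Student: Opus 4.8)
The plan is to expand the factor $1/(1-xt)_q^{n+2}$ as a power series in $t$ using the $q$-binomial series already recorded in the introduction, and then multiply by the $t$-independent factor $(1-x)_q^n$ to read off the coefficient of $t^k$ directly.

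First I would isolate the identity implicit in the opening computation of $e_q$, namely
$$\frac{1}{(1-y)_q^m} = \sum_{k=0}^\infty \left[\begin{array}{c} m+k-1 \\ k \end{array}\right]_q y^k.$$
Applying it with the substitutions $m = n+2$ and $y = xt$ yields
$$\frac{1}{(1-xt)_q^{n+2}} = \sum_{k=0}^\infty \left[\begin{array}{c} n+k+1 \\ k \end{array}\right]_q x^k t^k,$$
where the key point is that the shift $m \mapsto n+2$ in the exponent produces exactly the upper index $n+k+1$ appearing in the $q$-binomial coefficient of $m_{k,n}^q(x)$.

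Next I would multiply both sides by $(1-x)_q^n$, which carries no dependence on $t$ and therefore passes inside the summation:
$$\frac{(1-x)_q^n}{(1-xt)_q^{n+2}} = \sum_{k=0}^\infty \left[\begin{array}{c} n+k+1 \\ k \end{array}\right]_q x^k (1-x)_q^n\, t^k.$$
Comparing with the definition $m_{k,n}^q(x) = \left[\begin{array}{c} n+k+1 \\ k \end{array}\right]_q x^k (1-x)_q^n$, the coefficient of $t^k$ on the right-hand side is precisely $m_{k,n}^q(x)$, which is the assertion of the theorem.

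Since the argument reduces to a single clean application of the $q$-binomial series, I do not expect a genuine obstacle here; the only care needed is the bookkeeping of the index shift $n \mapsto n+2$ and, if one wishes to be rigorous about convergence, a remark that the identity holds as a formal power series in $t$ (equivalently for $|xt|$ small enough that the series converges on $x \in [0,1]$).
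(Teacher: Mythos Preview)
Your proof is correct and follows exactly the same route as the paper: the paper's proof is the single line
\[
\frac{(1-x)_q^n}{(1-xt)_q^{n+2}} = \sum_{k=0}^\infty \left[\begin{array}{c} n+k+1 \\ k \end{array}\right]_q (1-x)_q^n\, x^k t^k = \sum_{k=0}^\infty m_{k,n}^q(x)\, t^k,
\]
which is precisely your computation with the justification (the $q$-binomial series with $m=n+2$, $y=xt$) left implicit. Your write-up simply makes explicit what the paper asserts as ``easily seen.''
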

    \begin{proof}
   It is easily seen that
   \begin{eqnarray*}
 \frac{(1-x)_q^n}{(1-xt)_q^{n+2}}=  \sum_{k=0}^\infty  \left[\begin {array}{c}
  n+k+1 \\
   k
   \end{array}
     \right]_q(1-x)_q^n x^kt^k= \sum_{k=0}^\infty m_{k,n}^q(x) t^k .
 \end{eqnarray*}
 This completes the proof.
      \end{proof}
      \section{\bf Generating Function for $q$-Beta basis}
 \begin{thm}        It is observed by us that $v_{k,n}^q(x)$ is the coefficient of
         $\frac{t^k}{[n+k]_q!}$ in the expansion of $\frac{1}{(1+x)_q^{n+1}}E_q\left(\frac{(1-q)xt}{(1+q^{n+1}x)_q}\right).$
 \end{thm}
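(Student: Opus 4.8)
The plan is to expand the second $q$-exponential $E_q$ as a power series and read off the coefficient of $\frac{t^k}{[n+k]_q!}$ directly. Recall from the introduction that
$$E_q(y)=\sum_{j=0}^\infty \frac{q^{j(j-1)/2}y^j}{(1-q)(1-q^2)\cdots(1-q^j)},$$
and that $(1-q)(1-q^2)\cdots(1-q^j)=(1-q)^j[j]_q!$. First I would substitute $y=\frac{(1-q)xt}{(1+q^{n+1}x)_q}$ into this series. In the $j$-th term the factor $(1-q)^j$ produced by $y^j$ cancels against the $(1-q)^j$ in the denominator, leaving
$$\frac{1}{(1+x)_q^{n+1}}E_q\left(\frac{(1-q)xt}{(1+q^{n+1}x)_q}\right)=\sum_{k=0}^\infty \frac{q^{k(k-1)/2}\,x^k\,t^k}{[k]_q!\,(1+x)_q^{n+1}\,(1+q^{n+1}x)_q^{\,k}}.$$
This isolates the coefficient of $t^k$ cleanly, so the only remaining work is to reshape the $x$-dependent factor into the form $v_{k,n}^q(x)/[n+k]_q!$.

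The central step is a telescoping identity for the $q$-shifted products. Writing $(1+x)_q^{n+1}=\prod_{s=0}^{n}(1+q^s x)$ and interpreting the denominator as the $q$-shifted power $(1+q^{n+1}x)_q^{\,k}=\prod_{j=0}^{k-1}(1+q^{n+1+j}x)=\prod_{s=n+1}^{n+k}(1+q^s x)$, their product telescopes to $\prod_{s=0}^{n+k}(1+q^s x)=(1+x)_q^{n+k+1}$. Hence the coefficient of $t^k$ collapses to $\frac{q^{k(k-1)/2}x^k}{[k]_q!\,(1+x)_q^{n+k+1}}$. Recognizing this telescoping — and in particular reading $(1+q^{n+1}x)_q^{\,k}$ as a $q$-shifted power rather than an ordinary $k$-th power — is the conceptual crux that makes the denominator match the $(1+x)_q^{n+k+1}$ occurring in the definition of $v_{k,n}^q(x)$. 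Without that reading the product would not telescope and the identity would fail.

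Finally I would handle the normalizing constant. Multiplying and dividing by $[n+k]_q!$ shows that the coefficient of $\frac{t^k}{[n+k]_q!}$ equals $\frac{q^{k(k-1)/2}[n+k]_q!}{[k]_q!}\frac{x^k}{(1+x)_q^{n+k+1}}$, and comparing with $v_{k,n}^q(x)=\frac{q^{k(k-1)/2}}{B_q(k+1,n)}\frac{x^k}{(1+x)_q^{n+k+1}}$ reduces the whole statement to evaluating the $q$-Beta function as the ratio of $q$-factorials $B_q(k+1,n)=[k]_q!/[n+k]_q!$ dictated by the basis. I expect the main obstacle to be exactly this last piece of bookkeeping: fixing the normalization of $B_q(k+1,n)$ consistently with the definition of $v_{k,n}^q(x)$ and checking that it does collapse to the required ratio (reconciling, in particular, any $[n-1]_q!$-type factor that the chosen $q$-Gamma convention may introduce). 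Everything else is a routine reindexing of the $E_q$ series together with the telescoping product above.
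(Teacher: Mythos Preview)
Your approach is essentially identical to the paper's: expand $E_q$ termwise, use the telescoping identity $(1+x)_q^{n+1}(1+q^{n+1}x)_q^{\,k}=(1+x)_q^{n+k+1}$, and then rearrange the factorial weights to expose $B_q(k+1,n)$. Your caution about the normalization is warranted: the paper does not use the bare ratio $B_q(k+1,n)=[k]_q!/[n+k]_q!$ you wrote, but rather passes through $\dfrac{1}{[k]_q!}=\left[\begin{smallmatrix}n+k\\ n\end{smallmatrix}\right]_q\dfrac{[n]_q}{[n+k]_q!}$ and then identifies $\dfrac{1}{B_q(k+1,n)}=\left[\begin{smallmatrix}n+k\\ n\end{smallmatrix}\right]_q[n]_q$, so the missing $[n-1]_q!$-type factor you anticipated is exactly what you need to insert to make the bookkeeping close.
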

  \begin{proof}
  First using the definition of $q$-exponential $E_q(x),$ we have
\begin{align*}
\frac{1}{(1+x)_q^{n+1}}E_q\left(\frac{(1-q)xt}{(1+q^{n+1}x)_q}\right)&=\frac{1}{(1+x)_q^{n+1}}\sum_{k=0}^\infty q^{k(k-1)/2}\frac{x^k}{(1+q^{n+1}x)_q^k}\frac{t^k}{[k]_q!}\\
&=\sum_{k=0}^\infty q^{k(k-1)/2}\frac{x^k}{(1+x)_q^{n+k+1}}\frac{t^k}{[k]_q!}\\
&=\sum_{k=0}^\infty q^{k(k-1)/2}\frac{x^kt^k}{(1+x)_q^{n+k+1}}\frac{[k+1]_q[k+2]_q...[n+k]_q}{[n+k]_q!}\\
&=\sum_{k=0}^\infty q^{k(k-1)/2}\frac{x^k}{(1+x)_q^{n+k+1}}{\left[ {\begin{array}{*{20}{c}}
   {n + k }  \\
   n  \\
\end{array}} \right]_q}\frac{[n]_q t^k}{[n+k]_q!}\\
&=\sum_{k=0}^\infty \frac{1}{B_q(k+1,n)} q^{k(k-1)/2}\frac{x^k}{(1+x)_q^{n+k+1}}\frac{ t^k}{[n+k]_q!}\\
&=\sum_{k=0}^\infty v_{k,n}^q(x)\frac{ t^k}{[n+k]_q!}.
\end{align*}
This completes the proof of generating function.\\
 \end{proof}
 {\bf Acknowledgement:} The present work was done when the first author visited Division of General Education-Mathematics, Kwangwoon University, Seoul, S. Korea during June 2010.

\end{document}